\renewcommand{\phi}{\varphi}
\newcommand{\bbF}{\mathbb{F}}
\newcommand{\bbQ}{\mathbb{Q}}
\newcommand{\bbZ}{\mathbb{Z}}
\newcommand{\rmB}{\mathrm{B}}
\newcommand{\rmE}{\mathrm{E}}
\newcommand{\rmF}{\mathrm{F}}
\newcommand{\rmH}{\mathrm{H}}
\newcommand{\rmN}{\mathrm{N}}
\newcommand{\rmP}{\mathrm{P}}
\newcommand{\rmS}{\mathrm{S}}
\newcommand{\op}{\mathrm{op}}
\DeclareMathOperator{\B}{B\!}
\DeclareMathOperator{\Bhaut}{Bhaut}
\DeclareMathOperator{\Aut}{Aut}
\DeclareMathOperator{\BAut}{BAut}
\DeclareMathOperator{\GL}{GL}
\DeclareMathOperator{\Ad}{Ad}
\DeclareMathOperator{\Hom}{Hom}
\DeclareMathOperator{\Ext}{Ext}
\DeclareMathOperator{\Ker}{Ker}
\DeclareMathOperator{\rk}{rk}
\DeclareMathOperator{\gr}{gr}
\DeclareMathOperator{\Lie}{Lie}
\DeclareMathOperator*{\colim}{colim}
\newtheorem{theorem}{Theorem}
\newtheorem{proposition}[theorem]{Proposition}
\theoremstyle{definition}
\newtheorem{definition}[theorem]{Definition}
\newtheorem{example}[theorem]{Example}
\newtheorem{remark}[theorem]{Remark}
\numberwithin{theorem}{section}
\numberwithin{equation}{section}
\title{Twisted homological stability for extensions and automorphism groups of free nilpotent groups}
\author{Markus Szymik}
\date{January 2014}
\begin{document}

\maketitle

\renewcommand{\abstractname}{}

\begin{abstract}
\noindent 
We prove twisted homological stability with polynomial coefficients for automorphism groups of free nilpotent groups of any given class. These groups interpolate between two extremes for which homological stability was known before, the general linear groups over the integers and the automorphism groups of free groups. The proof presented here uses a general result that applies to arbitrary extensions of groups, and that has other applications as well.\\
\phantom{M}\\
MSC 2010:
19B14, 
20F28. 
\end{abstract}


\section*{Introduction}

Various notions of stability have influenced the prospering and thriving of algebraic~K-theory from its classical roots in algebra to its higher branches with connections to topology. See~\cite{Bass},~\cite{Lam+Siu} and~\cite{van_der_Kallen}, for example. For instance, the stable homology of the general linear groups~$\GL_r(\bbZ)$ over the ring~$\bbZ$ of integers, when~\hbox{$r\to\infty$}, can be interpreted as the homology of the algebraic~K-theory space of~$\bbZ$. Quillen, one of the founders of the theory, was also the first to prove homological stability for general linear groups~(over certain fields). Borel proved results that implied integral statements, but only the~(independent) work of Charney~\cite{Charney}, and Maazen (unpublished) and van der Kallen~\cite{van_der_Kallen} completed the picture for the general linear groups over the ring~$\bbZ$ of integers. 


The general linear groups~$\GL_r(\bbZ)=\Aut(\bbZ^r)$ are the automorphism groups of the free abelian groups of rank~$r$. A related family of groups is given by the automorphism groups~$\Aut(\rmF_r)$ of the free groups~$\rmF_r$ on~$r$ generators, without any commutativity assumption. Corresponding homological stability results for these groups are more recent, see~\cite{Hatcher} and~\cite{Hatcher+Vogtmann}. 


An interpolation between these two sequences of groups, namely~$\GL_r(\bbZ)$ and~$\Aut(\rmF_r)$, is given by relaxing the commutativity condition~(that is satisfied only by abelian groups) to higher nilpotency classes~$c\geqslant1$. The free nilpotent groups~$\rmN^c_r$ of class~$c$ give rise to towers
\[
\rmF_r\longrightarrow\cdots\longrightarrow\rmN^3_r\longrightarrow\rmN^2_r\longrightarrow\rmN^1_r=\bbZ^r
\]
of groups, and also (albeit slightly less obviously) to towers
\[
\Aut(\rmF_r)\longrightarrow\cdots\longrightarrow\Aut(\rmN^3_r)\longrightarrow\Aut(\rmN^2_r)\longrightarrow\Aut(\rmN^1_r)=\GL_r(\bbZ)
\]
of their automorphism groups.


In this paper, we prove homological stability, when~\hbox{$r\to\infty$}, for the automorphism groups~$\Aut(\rmN^c_r)$ of free nilpotent groups of any fixed class~$c$ with twisted coefficients~(Theorem~\ref{thm:HS_nil}). One may argue whether or not the main coefficients of interest are the constant ones, but the proof that we present here is inductive, and the more general statement turns out to keep the induction on fire. Each step of the induction uses a general result~(Theorem~\ref{thm:inflation}) that applies to arbitrary extensions of groups, and that has other applications as well. For example, in the final Section~\ref{sec:other}, we explain how to deduce the (known) homological stability results for wreath products and braid groups from the corresponding property of the symmetric groups.


\section{Inflation of homological stability}

In this section, after reviewing a general setup for homological stability questions, we extend this framework so as to include morphisms that allow us to compare different families of groups. After these preliminaries, we state and prove our main general result, Theorem~\ref{thm:inflation}.

\subsection{Homological stability}

Let us begin by introducing the terminology that we will use throughout the paper.

\begin{definition}
A sequence
\[
G_\bullet=(
G_0\longrightarrow
G_1\longrightarrow
G_2\longrightarrow\cdots)
\]
of group homomorphism that can be composed as displayed will be called, by abuse of language, a {\em sequence of groups}.
\end{definition}

\begin{example}
The sequence~$\rmS_\bullet$ of the symmetric groups~$\rmS_r$ of permutations of sets~$\{1,\dots,r\}$. The group homomorphisms~$\rmS_r\to \rmS_{r+1}$ are given by extending a given permutation so that~$r+1$ is a fixed point.
\end{example}

\begin{example}
The sequence~$\GL_\bullet(\bbZ)$ of the general linear groups~$\GL_r(\bbZ)$ of automorphisms of the free abelian group~$\bbZ^r$. The group homomorphisms~$\GL_r(\bbZ)\to \GL_{r+1}$ are given by extending a given automorphism so that the~$(r+1)$-st generator is fixed.
\end{example}

\begin{definition}
If~$G_\bullet$ is a sequence of groups, then a {\em~$G_\bullet$-module} is a sequence
\[
M_\bullet=(
M_0\longrightarrow
M_1\longrightarrow
M_2\longrightarrow\cdots)
\]
of homomorphisms that can be composed as displayed, together with additive~$G_r$-actions on each~$M_r$ such that~$M_r\to M_{r+1}$ is~$\GL_r(\bbZ)$-equivariant with respect to the action of~$\GL_r(\bbZ)$ on~$M_{r+1}$ via restriction along~$\GL_r(\bbZ)\to \GL_{r+1}$.
\end{definition}

\begin{example}
If~$A$ is an abelian group then the constant sequence~$(A=A=A=\dots)$ is a~$G_\bullet$-module for all sequences of groups, if~$A$ gets the trivial~$G_r$-action for each~$r$.
\end{example}

If~$M_\bullet$ is a~$G_\bullet$-module, then the group homology of the groups~$G_r$ with coefficients in the corresponding~$G_r$-modules~$M_r$ is related by homomorphisms
\begin{equation}\label{eq:homs}
\rmH_d(G_r;M_r)\longrightarrow\rmH_d(G_r;M_{r+1})\longrightarrow\rmH_d(G_{r+1};M_{r+1})
\end{equation}
of abelian groups.

\begin{definition}
A sequence~$G_\bullet$ is {\em homologically stable} with respect to a~$G_\bullet$-module~$M_\bullet$ if in all dimensions~$d$, the compositions \eqref{eq:homs} are eventually isomorphisms.
\end{definition}

We will use similar terminology if we have a class of~$G_\bullet$-modules instead of just one, for example the class of trivial~$G_\bullet$-modules. 

Regardless of the answer to the question if the sequence of homomorphisms~\eqref{eq:homs} stabilizes or not, we also have the following definition.

\begin{definition}
The colimit 
\[
\rmH_d(G_\bullet;M_\bullet)=\colim_r\rmH_d(G_r;M_r)
\]
is called the {\em stable homology} of~$G_\bullet$ with respect to~$M_\bullet$. 
\end{definition}

In the case of stability, a calculation of the stable homology immediately implies infinitely many unstable values.

\subsection{Morphisms}

The main aim of the present paper is to describe a means that allows to compare homological stability for different sequences of groups. For this purpose we now define the appropriate notion of morphisms between them, so that they form a category.

\begin{definition}
A {\em morphism}~$q_\bullet\colon G_\bullet\to Q_\bullet$ between two sequences of groups is a sequence~$q_r\colon G_r\to Q_r$ of group homomorphisms that commute with the homomorphisms in the sequence.
\end{definition}

For each morphism~$q_\bullet\colon G_\bullet\to Q_\bullet$ between sequences of groups, the kernels~$\Ker(q_r)$ form another sequence~$\Ker(q_\bullet)$ of groups, and this comes with a canonical morphism~\hbox{$k_\bullet\colon\Ker(q_\bullet)\to G_\bullet$}.

For each morphism~$q_\bullet\colon G_\bullet\to Q_\bullet$ between sequences of groups, and for each~$Q_\bullet$ module~$M_\bullet$, there is a restricted~$G_\bullet$-modules~$q_\bullet^*M_\bullet$. For example, if~$M_\bullet=A$ is constant, so is~$q_\bullet^*M_\bullet=A$.

\subsection{The inflation theorem}

We can now formulate and prove our main general result that allows us to relate twisted homological stability for sequences of groups that are connected by a homomorphism~\hbox{$q_\bullet\colon G_\bullet\to Q_\bullet$}.

\begin{theorem}\label{thm:inflation}
Let~$q_\bullet\colon G_\bullet\to Q_\bullet$ be a surjective morphism between sequences of groups with kernel~$k_\bullet\colon K_\bullet=\Ker(q_\bullet)\to G_\bullet$. The sequence~$G_\bullet$ satisfies twisted homological stability with respect to a~$G_\bullet$-module~$M_\bullet$ if the sequence~$Q_\bullet$ satisfies twisted homological stability with respect to the~$Q_\bullet$-modules~$\rmH_t(K_\bullet;k_\bullet^*M_\bullet)$ for each~\hbox{$t\geqslant0$}.
\end{theorem}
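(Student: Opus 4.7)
The plan is to apply the Lyndon--Hochschild--Serre spectral sequence of the extension $K_r\to G_r\to Q_r$ with coefficients in the $G_r$-module $M_r$, assembled into a morphism as $r$ varies. For each $r$ this yields a first-quadrant homological spectral sequence
\[
E^2_{s,t}(r)=\rmH_s(Q_r;\rmH_t(K_r;M_r))\Longrightarrow\rmH_{s+t}(G_r;M_r),
\]
and the hypothesis is designed precisely to give stability on the $E^2$-page.

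The first step is to organize these spectral sequences into a single morphism. The compatible sequences $K_\bullet$, $G_\bullet$, and $Q_\bullet$, together with the equivariant maps $M_r\to M_{r+1}$, induce compatible maps on bar resolutions, and hence a morphism of LHS spectral sequences. In particular, the action of $G_r$ on $K_r$ by conjugation, combined with the action on $M_r$, descends to an action of $Q_r$ on $\rmH_t(K_r;M_r)$, and these actions assemble into a $Q_\bullet$-module structure on $\rmH_t(K_\bullet;k_\bullet^*M_\bullet)$ in the sense of the earlier definition. The induced morphism of $E^2$-pages is then precisely the homomorphism whose eventual bijectivity is the standing assumption of the theorem.

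The second step is a standard comparison argument. Fix a homological degree $d$. Only the finitely many pairs $(s,t)$ with $s+t\leqslant d+1$ can contribute to $E^\infty$ in total degree $d$ or carry a differential into or out of that strip. By hypothesis, for each such $(s,t)$ there is a threshold from which the map $E^2_{s,t}(r)\to E^2_{s,t}(r+1)$ is an isomorphism; taking the maximum of these finitely many thresholds yields a single $r_0$ beyond which the whole relevant strip of $E^2$-pages is stable. An induction on the page number, applying the five lemma to the kernels and images of the differentials, propagates stability through each subsequent page, and hence to $E^\infty$ in total degree $d$. A final five-lemma argument along the convergence filtration by $s$ upgrades this to the desired stability of $\rmH_d(G_r;M_r)\to\rmH_d(G_{r+1};M_{r+1})$.

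The main obstacle I anticipate is not the abstract spectral sequence comparison, which is classical, but the careful identification of the $Q_\bullet$-module structure on $\rmH_t(K_\bullet;k_\bullet^*M_\bullet)$ postulated in the hypothesis with the inflated action arising naturally from the LHS construction, so that the assumed twisted stability really does match the $E^2$-maps. Once this matching is in place, the remainder is a finite-degree bookkeeping argument.
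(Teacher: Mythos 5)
Your proposal is correct and follows essentially the same route as the paper: the Lyndon--Hochschild--Serre spectral sequence of the extension, eventual isomorphisms on the $E^2$-page, strong convergence to pass to $E^\infty$, and the finite filtration on abutment. The paper compresses your five-lemma bookkeeping into the phrase ``strong convergence of the spectral sequence,'' but the underlying argument is identical.
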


\begin{proof}
For each~$r\geqslant0$, the extension
\[
1\longrightarrow 
K_r\longrightarrow
G_r\longrightarrow
Q_r\longrightarrow1
\]
gives rise to a Lyndon-Hochschild-Serre spectral sequence
\[
\rmE^2_{s,t}(r)\cong\rmH_s(Q_r;\rmH_t(K_r;k_r^*M_r))\Longrightarrow\rmH_{s+t}(G_r;M_r).
\]
By assumption, for each fixed pair~$(s,t)$, the sequence
\[
\cdots\longrightarrow\rmE^2_{s,t}(r)\longrightarrow\rmE^2_{s,t}(r+1)\longrightarrow\cdots
\]
stabilizes for large~$r$. It follows from the strong convergence of the spectral sequence that the sequence
\[
\cdots\longrightarrow\rmE^\infty_{s,t}(r)\longrightarrow\rmE^\infty_{s,t}(r+1)\longrightarrow\cdots
\]
also stabilizes for large~$r$. But this is the associated graded of a finite filtration of the maps \eqref{eq:homs}
\[
\cdots\longrightarrow\rmH_{s+t}(G_r;M_r)\longrightarrow\rmH_{s+t}(G_{r+1};M_{r+1})\longrightarrow\cdots,
\]
so that these are eventually isomorphisms as well.
\end{proof}


\section{Free nilpotent groups}

In this section, we introduce some notation and present some basic results about the free nilpotent groups of a given class~$c\geqslant1$. Only in the next section will we turn towards their automorphisms. All of these results are certainly well-known, and our only claims to originality here are for the exposition, where the focus is on homological methods rather than those from Lie theory.

Let~$G$ be a (discrete) group. For integers~$n\geqslant1$, subgroups~$\Gamma_n(G)$ are defined inductively by~$\Gamma_1(G)=G$ and~$\Gamma_{n+1}(G)=[G,\Gamma_n(G)]$. We also set~$\Gamma_\infty(G)$ to be the intersection of all the~$\Gamma_n(G)$. This gives a  series
\[
G=\Gamma_1(G)\geqslant\Gamma_2(G)\geqslant\dots\geqslant\Gamma_\infty(G)
\]
of normal subgroups, the {\em descending/lower central series} of~$G$. The associated graded group
\[
\gr(G)=\bigoplus_{n=1}^\infty\gr^n(G)
\]
with 
\[
\gr^n(G)=\Gamma_n(G)/\Gamma_{n+1}(G)
\]
is abelian, and comes with the structure of a graded Lie algebra. The first graded piece~$\gr^1(G)$ is just the abelianization of~$G$. A group is {\em abelian} if~$\Gamma_2(G)$ is trivial.
A group is {\em nilpotent} if some~$\Gamma_n(G)$ is trivial. More precisely, a group~$G$ is nilpotent of class (at most)~$c$ (for some~$c\geqslant 1$) if~$\Gamma_{c+1}(G)$ is trivial. This means that abelian groups are precisely the groups which are nilpotent of class~$1$. A group is {\em residually nilpotent} if~$\Gamma_\infty(G)~$ is trivial.

Let~$\rmF_r$ denote the free group on a set of~$r$ generators. The case~$r=1$ is somewhat special, since~$\rmF_1\cong\bbZ$ is abelian, so that~$\Gamma_2(\rmF_1)$ is trivial. All the other ones are not nilpotent, but residually nilpotent \cite{Magnus}. The first graded piece~$\gr^1(\rmF_r)$ is the free abelian group on~$r$ generators. This gives an induced homomorphism
\begin{equation}\label{eq:Witt1}
\Lie_r\overset{\cong}{\longrightarrow}\gr(\rmF_r)
\end{equation}
of graded Lie algebras, where
\[
\Lie_r\cong\bigoplus_{n=1}^\infty\Lie_r^n
\]
is the free Lie algebra (over the ring~$\bbZ$) on~$r$ generators. (See~\cite[Chapter~IV]{Serre} for an exposition.) It turns out, as has already been indicated, that~\eqref{eq:Witt1} is an isomorphism, see~\cite{Witt}. In particular 
\begin{equation}\label{eq:Witt2}
\Gamma_n(\rmF_r)/\Gamma_{n+1}(\rmF_r)=\gr(\rmF_r)\cong\Lie_r^n
\end{equation}
is a free abelian group of rank
\[
\rk(\Lie_r^n)=\frac{1}{n}\sum_{d|n}\mu(d)r^{n/d}
\]
over~$\bbZ$, where~$\mu$ is the M\"obius function.

The universal examples of nilpotent groups of class~$c\geqslant1$ are the quotients
\[
\rmN_r^c=\rmF_r/\Gamma_{c+1}(\rmF_r).
\] 
As the two extreme cases, we obtain~$\rmN_r^1\cong\bbZ^r$ and~$\rmN_r^\infty\cong\rmF_r$. We record the following description of the low-dimensional homology of the groups~$\rmN_r^c$ under investigation.

\begin{proposition}\label{prop:homology12}
There are isomorphisms
\begin{align*}
\rmH_1(\rmN_r^c;\bbZ)&\cong\frac{\Gamma_{1}(\rmF_r)}{\Gamma_{2}(\rmF_r)}\cong\Lie_r^1\cong\bbZ^r\\
\rmH_2(\rmN_r^c;\bbZ)&\cong\frac{\Gamma_{c+1}(\rmF_r)}{\Gamma_{c+2}(\rmF_r)}\cong\Lie_r^{c+1}
\end{align*}
of homology groups. In particular, these groups are free abelian of finite rank.
\end{proposition}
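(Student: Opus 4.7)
The plan is to compute each of the two homology groups directly from its standard description in terms of a presentation, using Hopf's formula for the degree-two case, and then to invoke the Witt isomorphism~\eqref{eq:Witt1} to translate the resulting commutator quotients into pieces of the free Lie algebra.

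For the first isomorphism, I would note that $\rmH_1(G;\bbZ)$ is always the abelianization of~$G$. Since $\rmN_r^c = \rmF_r/\Gamma_{c+1}(\rmF_r)$ and $\Gamma_{c+1}(\rmF_r)\leqslant\Gamma_2(\rmF_r)$ for every $c\geqslant 1$, the abelianization of $\rmN_r^c$ coincides with the abelianization of~$\rmF_r$, which is $\Gamma_1(\rmF_r)/\Gamma_2(\rmF_r)\cong\Lie_r^1\cong\bbZ^r$ by~\eqref{eq:Witt2} applied with $n=1$.

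For the second isomorphism, the natural tool is Hopf's formula: for a presentation $G=F/R$ of a group~$G$ as a quotient of a free group~$F$ by a normal subgroup~$R$, one has
\[
\rmH_2(G;\bbZ)\cong\frac{R\cap[F,F]}{[F,R]}.
\]
Applying this to the tautological presentation $\rmN_r^c=\rmF_r/\Gamma_{c+1}(\rmF_r)$, I would observe that $\Gamma_{c+1}(\rmF_r)$ is already contained in $[\rmF_r,\rmF_r]=\Gamma_2(\rmF_r)$, so the intersection in the numerator is simply $\Gamma_{c+1}(\rmF_r)$; and the denominator is $[\rmF_r,\Gamma_{c+1}(\rmF_r)]=\Gamma_{c+2}(\rmF_r)$ by the very definition of the descending central series. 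This yields the first identification, and then~\eqref{eq:Witt2} with $n=c+1$ identifies this quotient with $\Lie_r^{c+1}$.

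Finally, the statement that these groups are free abelian of finite rank is immediate from the explicit rank formula for $\Lie_r^n$ already recorded. The only conceptual obstacle is the appeal to Hopf's formula, but that is entirely standard; no genuine computation is needed beyond recognizing that the lower central series quotients feed directly into the formula.
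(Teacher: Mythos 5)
Your proof follows the same route as the paper: for $\rmH_1$ you use that a quotient by a subgroup of the commutator subgroup leaves the abelianization unchanged, and for $\rmH_2$ you apply Hopf's formula to the presentation $\rmN_r^c=\rmF_r/\Gamma_{c+1}(\rmF_r)$, computing the numerator and denominator exactly as in the paper, then translate via the Witt isomorphism. The argument is correct and essentially identical to the paper's own.
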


\begin{proof}
The first statement follows immediately from the fact that any surjective homomorphism~\hbox{$G\to Q$} of groups induces an isomorphism between the first integral homology groups as soon as the kernel is contained in the commutator subgroup. Here, the kernel of~$\rmF_r\to\rmN_r^c$ is~$\Gamma_{c+1}(\rmF_r)$ which is contained in 
the commutator subgroup~$\Gamma_2(\rmF_r)$.

For the second statement, Hopf's Theorem \cite{Hopf} (or the Lyndon-Hochschild-Serre spectral sequence) says that
\[
\rmH_2(\rmF_r/R;\bbZ)\cong\frac{R\cap[\rmF_r,\rmF_r]}{[\rmF_r,R]}
\]
for each normal subgroup~$R$ of~$\rmF_r$.
In the particular case~$R=\Gamma_{c+1}(\rmF_r)$, we get
\[
\Gamma_{c+1}(\rmF_r)\cap\Gamma_2(\rmF_r)=\Gamma_{c+1}(\rmF_r)
\]
and
\[
[\rmF_r,\Gamma_{c+1}(\rmF_r)]=\Gamma_{c+2}(\rmF_r),
\]
so that
\[
\rmH_2(\rmF_r/\Gamma_{c+1}(\rmF_r);\bbZ)\cong\frac{\Gamma_{c+1}(\rmF_r)}{\Gamma_{c+2}(\rmF_r)}\cong\Lie_r^{c+1}
\]
as claimed.
\end{proof}

\begin{proposition}
For each fixed rank~$r$ there is an extension
\begin{equation}\label{eq:Nextension}
0\longrightarrow\rmH_2(\rmN_r^{c-1};\bbZ)\longrightarrow\rmN_r^c\longrightarrow\rmN_r^{c-1}\longrightarrow1
\end{equation}
of groups.
\end{proposition}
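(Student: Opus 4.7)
The plan is to identify the extension with the natural projection
\[
\rmN_r^c = \rmF_r/\Gamma_{c+1}(\rmF_r) \longrightarrow \rmF_r/\Gamma_c(\rmF_r) = \rmN_r^{c-1}.
\]
This map is well-defined and surjective because $\Gamma_{c+1}(\rmF_r) \leqslant \Gamma_c(\rmF_r)$, and its kernel is readily computed to be $\Gamma_c(\rmF_r)/\Gamma_{c+1}(\rmF_r)$.

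It then remains to identify this kernel with $\rmH_2(\rmN_r^{c-1};\bbZ)$. But this is precisely the second isomorphism of Proposition~\ref{prop:homology12}, applied with $c$ replaced by $c-1$: that proposition gives
\[
\rmH_2(\rmN_r^{c-1};\bbZ) \cong \frac{\Gamma_c(\rmF_r)}{\Gamma_{c+1}(\rmF_r)} \cong \Lie_r^c,
\]
which matches the kernel on the nose.

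Since $\Gamma_c(\rmF_r)/\Gamma_{c+1}(\rmF_r)$ is a graded piece of $\gr(\rmF_r)$, it is abelian, so writing the kernel additively with the symbol $0$ on the left of the sequence is justified. There is no real obstacle here; the statement is essentially a repackaging of the preceding proposition together with the observation that the lower central series filtration is nested.
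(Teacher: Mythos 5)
Your proof is correct and takes exactly the same route as the paper, which compresses the argument to a single sentence citing the definitions as quotients, the isomorphism theorem, and the preceding proposition; you have simply unpacked those three ingredients explicitly.
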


\begin{proof}
This follows from the definitions of the groups as quotients, the isomorphism theorem, and the calculations in~ the preceding Proposition~\ref{prop:homology12}.
\end{proof}

\begin{remark}
Therefore, each group~$\rmN_r^c$ can be described inductively as an extension of~$\rmN_r^{c-1}$ by its Schur multiplier. This extension is central; in fact, the kernel of~\hbox{$\rmN_r^c\to\rmN_r^{c-1}$} is equal to the center of~$\rmN_r^c$. However, we note that this is not a universal central extension in the usual sense, since the groups involved are not perfect. But still, the extension is classified by the tautological class in
\[
\rmH^2(\rmN_r^{c-1};\rmH_2(\rmN_r^{c-1};\bbZ))\cong\Hom(\rmH_2(\rmN_r^{c-1};\bbZ),\rmH_2(\rmN_r^{c-1};\bbZ))
\]
which is the identity. The isomorphism is given by the universal coefficient theorem, since~$\Ext(\rmH_1(\rmN_r^{c-1};\bbZ),\rmH_2(\rmN_r^{c-1};\bbZ))=0$.
\end{remark}

\begin{remark}\label{rem:BN_are_manifolds}
Since the classifying spaces for free abelian groups are tori, the extension~\eqref{eq:Nextension} shows inductively that the classifying spaces of the groups~$\rmN_r^c$ can be constructed as torus bundles over tori. In particular, they can be modeled as compact manifolds without boundary, as long as~$c$ is finite. In fact, their universal covers can taken to be nilpotent Lie groups in which the groups embed as co-compact lattices. This follows from Malcev's theory, but it is also described explicitly for this special case in~\cite[2.1]{Kassabov}.
\end{remark}


\section{Automorphisms of free nilpotent groups}

In the previous section, we have defined the free nilpotent groups~$\rmN_r^c$ of class~$c$ on~$r$ generators. We are interested in the groups
\[
\Aut(\rmN_r^c)
\]
of their automorphism and the classifying spaces thereof. In the case~$c=1$, these are the general linear groups~$\GL_r(\bbZ)$, and in the limiting case~$c=\infty$, these are the automorphism groups~$\Aut(\rmF_r)$ of free groups. 

The aim of this section is to explain the way in that the groups~$\Aut(\rmN_r^c)$ vary with~$c$ in between, when~$r$ is fixed. (See also~\cite{Morita} for an exposition with a slightly different focus.) We will also let~$r$ vary, but only later, in order to address the question of homological stability. 

Since the kernel of the canonical homomorphism~$\rmN_r^c\to\rmN_r^{c-1}$ is the center of~$\rmN_r^c$, every automorphism of~$\rmN_r^c$ preserves it and induces an automorphism of the quotient~$\rmN_r^{c-1}$. This gives a homomorphism
\begin{equation}\label{eq:induct}
\Aut(\rmN_r^c)\longrightarrow\Aut(\rmN_r^{c-1}).
\end{equation}
This will be the means that allows for an inductive approach towards these groups, starting with the case when~$\Aut(\rmN_r^1)=\GL_r(\bbZ)$ is the general linear group.

Let us first identify the kernel of~\eqref{eq:induct}, compare~\cite{Johnson}. An element~$\alpha$ in the kernel agrees with the identity on~$\rmN_r^c$ modulo elements in the kernel of~$\rmN_r^c\to\rmN_r^{c-1}$ which has been identified with the Schur multiplier~$\rmH_2(\rmN_r^{c-1};\bbZ)$. Thus, we can define a map~$\alpha^\flat\colon\rmN_r^c\to\rmH_2(\rmN_r^{c-1};\bbZ)$ via~$\alpha^\flat(n)=\alpha(n)n^{-1}$. It is readily checked that~$\alpha^\flat$ is a homomorphism of groups. Since the target is abelian, this induces a homomorphism
\[
\alpha^\flat\colon\rmH_1(\rmN_r^{c-1};\bbZ)\cong\rmH_1(\rmN_r^c;\bbZ)\to\rmH_2(\rmN_r^{c-1};\bbZ)
\]
such that the composition
\[
\xymatrix@1{
\rmN_r^c\ar[r]&
\rmH_1(\rmN_r^{c-1};\bbZ)\ar[r]^{\alpha^\flat}&
\rmH_2(\rmN_r^{c-1};\bbZ)\ar[r]&
\rmN_r^c.
}
\]
recovers~$\alpha$ in the sense that the relation~$\alpha(n)=\alpha^\flat(n)n$ holds. Conversely, each homomorphism~$\beta\colon\rmH_1(\rmN_r^{c-1};\bbZ)\to\rmH_2(\rmN_r^{c-1};\bbZ)$ defines a map~$\beta^\sharp\colon\rmN_r^c\to\rmN_r^c$ by multiplication of the identity from the left with the composition
\[
\xymatrix@1{
\rmN_r^c\ar[r]&
\rmH_1(\rmN_r^{c-1};\bbZ)\ar[r]^\beta&
\rmH_2(\rmN_r^{c-1};\bbZ)\ar[r]&
\rmN_r^c,
}
\]
so that the relation~$\beta^\sharp(n)=\beta(n)n$ holds. It is now readily verified that the map~$\beta^\sharp$ is an automorphism, and that the induced map
\begin{equation}\label{eq:kernel}
\Hom(\rmH_1(\rmN_r^{c-1};\bbZ),\rmH_2(\rmN_r^{c-1};\bbZ))\to\Aut(\rmN_r^c)
\end{equation}
is an injective homomorphism. The arguments above can summarized to say that the image of~\eqref{eq:kernel} is the kernel of~\eqref{eq:induct}. More is true: It turns out that both~\eqref{eq:induct} and~\eqref{eq:kernel} can be composed so that they together give a description of the group~$\Aut(\rmN_r^c)$ as an extension of~$\Aut(\rmN_r^{c-1})$ by the free abelian group~$\Hom(\rmH_1(\rmN_r^{c-1};\bbZ),\rmH_2(\rmN_r^{c-1};\bbZ))$.

\begin{proposition}
For each fixed rank~$r$ there is an extension
\[
0
\longrightarrow\Hom(\rmH_1(\rmN_r^{c-1};\bbZ),\rmH_2(\rmN_r^{c-1};\bbZ))
\longrightarrow\Aut(\rmN_r^c)
\longrightarrow\Aut(\rmN_r^{c-1})
\longrightarrow1
\]
of groups.
\end{proposition}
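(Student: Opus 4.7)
By the lengthy discussion preceding the statement, the homomorphism~\eqref{eq:kernel} has already been shown to be injective with image equal to the kernel of the restriction map~\eqref{eq:induct}. Only the \emph{surjectivity} of~\eqref{eq:induct} remains, and the plan is to construct, for every $\bar\alpha\in\Aut(\rmN_r^{c-1})$, a lift $\alpha\in\Aut(\rmN_r^c)$.

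First I would fix generators $\bar x_1,\dots,\bar x_r$ of $\rmN_r^c$ with images $\bar{\bar x}_i$ in $\rmN_r^{c-1}$, choose arbitrary lifts $y_i\in\rmN_r^c$ of the elements $\bar\alpha(\bar{\bar x}_i)$, and invoke the universal property of $\rmN_r^c=\rmF_r/\Gamma_{c+1}(\rmF_r)$ as the free nilpotent group of class~$c$ on~$r$ generators to obtain a unique endomorphism $\alpha\colon\rmN_r^c\to\rmN_r^c$ with $\alpha(\bar x_i)=y_i$. By construction, $\alpha$ covers $\bar\alpha$.

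The main obstacle is to show that $\alpha$ is actually an automorphism. Since $\rmN_r^c$ is finitely generated and nilpotent, hence Hopfian, it is enough to verify surjectivity. Because $\alpha$ covers the surjection $\bar\alpha$, one has $\alpha(\rmN_r^c)\cdot Z=\rmN_r^c$ with $Z=\ker(\rmN_r^c\to\rmN_r^{c-1})$ the center, and it remains to show $Z\subseteq\alpha(\rmN_r^c)$. Under the isomorphism~\eqref{eq:Witt2}, $Z$ is identified with $\Lie_r^c$ and generated, as an abelian group, by iterated weight-$c$ commutators in the $\bar x_i$. The crucial observation is that any weight-$c$ commutator in elements of $\rmN_r^c$ lies in $Z$ and, conjugation being trivial there, depends only on the classes of its entries in $\rmH_1(\rmN_r^c;\bbZ)$. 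The weight-$c$ commutators in the $y_i$ are therefore obtained from those in the $\bar x_i$ by applying the automorphism of $\Lie_r^c$ induced by $\gr^1(\bar\alpha)\in\GL_r(\bbZ)$; since this automorphism carries a generating set to a generating set, the commutators in the $y_i$ still generate $Z$, so $\alpha$ is surjective as required.
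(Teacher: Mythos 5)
Your proof is correct, and it follows the same overall scheme as the paper: the kernel is identified with $\Hom(\rmH_1(\rmN_r^{c-1};\bbZ),\rmH_2(\rmN_r^{c-1};\bbZ))$ via the discussion surrounding~\eqref{eq:induct} and~\eqref{eq:kernel}, so the only thing left is surjectivity of~\eqref{eq:induct}. Where you genuinely diverge is that the paper disposes of this last step by a citation to Andreadakis (Theorem~2.1 of that reference), whereas you give a self-contained argument. Your argument is sound: lifting the images of the free generators arbitrarily and invoking the universal property of $\rmN_r^c$ produces a covering endomorphism $\alpha$; surjectivity of $\bar\alpha$ gives $\alpha(\rmN_r^c)\cdot Z=\rmN_r^c$; and $Z\subseteq\alpha(\rmN_r^c)$ because the weight-$c$ left-normed commutators in the $y_i$ lie in the image, and these generate $Z\cong\Lie_r^c$ since weight-$c$ commutators modulo $\Gamma_{c+1}$ depend only on the $\rmH_1$-classes of their entries and so are permuted by $\Lie_r^c(\gr^1(\bar\alpha))\in\Aut(\Lie_r^c)$; finally, the Hopfian property of finitely generated nilpotent groups upgrades surjectivity to bijectivity. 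This buys a more self-contained exposition at the cost of invoking two standard facts (Hopficity of f.g.\ nilpotent groups, and the multilinearity/descent of iterated commutators to $\rmH_1$ modulo one step deeper in the lower central series), both of which deserve at least a one-line justification or reference if written into the paper.
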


The only thing left to show is that the homomorphism~$\Aut(\rmN_r^c)\to\Aut(\rmN_r^{c-1})$ is surjective, and this is the content of~\cite[Theorem~2.1]{Andreadakis}. 

\begin{remark}\label{rem:action}
Any element of the quotient group~$\Aut(\rmN_r^{c-1})$ acts on the kernel group via restriction along~$\Aut(\rmN_r^{c-1})\to\Aut(\rmN_r^1)=\GL_r(\bbZ)$. In particular, if an element of~$\Aut(\rmN_r^{c-1})$ acts trivially on~$\rmH_1(\rmN_r^{c-1};\bbZ)\cong\bbZ^r$, then its action on~$\rmH_2(\rmN_r^{c-1};\bbZ)$ is also trivial.
\end{remark}

\begin{remark}
For every group there is an isomorphism between~$\Aut(G)$ and the group of based homotopy classes of base point preserving homotopy self-equivalences of the classifying space~$\B G$. In fact, there is a homotopy equivalence
\[
\BAut(G)\simeq\Bhaut(\B G,\star).
\]
As a consequence of Remark~\ref{rem:BN_are_manifolds}, we see that the classifying spaces~$\BAut(\rmN_r^c)$ are of the form~$\Bhaut(X,\star)$ for certain (acyclic!) manifolds~$X$ that are of the homotopy type of the classifying spaces~$\B\rmN_r^c$ of the free nilpotent groups of class~$c$ and rank~$r$.
\end{remark}


\section{Homological stability}

In this section, we prove homological stability for the automorphism groups of free nilpotent groups of any given class~$c\geqslant1$ with polynomial coefficients. The proof is by induction on the nilpotency class, starting with the known case~$c=1$ of the general linear groups. We recall the relevant results for this special case first.


\subsection{Twisted homological stability for the general linear groups}

Let us first review the twisted homological stability for the general linear groups as far as we need it here. See \cite{Dwyer} and \cite{van_der_Kallen}.

The sequence~$\GL_\bullet(\bbZ)$ of general linear groups has a standard module~$\bbZ^\bullet$ with the defining action of~$\GL_r(\bbZ)$ on~$\bbZ^r$. Let~$\overline{\bbZ}^\bullet$ denote the inverse transpose. 

\begin{definition}\label{def:polyGL}
We say that a~$\GL_\bullet(\bbZ)$-modules~$M_\bullet$ is {\em polynomial} if~$M_\bullet\cong F(\bbZ^\bullet,\overline{\bbZ}^\bullet)$ for some polynomial functor~$F$ from the category of pairs of abelian groups to abelian groups, compare \cite[Section~3]{Dwyer}.
\end{definition}

For example, the constant modules are clearly polynomial in this sense, and so is the adjoint representation/module~\hbox{$\Ad_\bullet(\bbZ)\cong\bbZ^\bullet\otimes\overline{\bbZ}^\bullet$}, which is the main example that appears in~\cite{Dwyer}. A similar example is given by the module~$\Hom(\bbZ^\bullet,\Lambda^2\overline{\bbZ}^\bullet)$. This will show up later again (in Proposition~\ref{prop:Hom_is_polynomial}). An immediate application of the main stability theorem in~\cite{Dwyer}, using Lemma~3.1. in {\it loc.cit.},~gives the following result.

\begin{theorem}\label{thm:HS_GL}{\upshape\bf(Dwyer)}
The sequence~$\GL_\bullet(\bbZ)$ of general linear groups satisfies homological stability with respect to polynomial~$\GL_\bullet(\bbZ)$-modules.
\end{theorem}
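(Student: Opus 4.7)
The plan is to reduce the statement directly to the main stability theorem of~\cite{Dwyer}, which is tailor-made for the polynomial coefficient systems under consideration. By Definition~\ref{def:polyGL}, any polynomial~$\GL_\bullet(\bbZ)$-module~$M_\bullet$ has the form~$M_\bullet\cong F(\bbZ^\bullet,\overline{\bbZ}^\bullet)$ for some polynomial bifunctor~$F$ from pairs of abelian groups to abelian groups. The point of using the inverse transpose on the second variable is precisely that both variables then become covariantly functorial along the stabilization maps~$\bbZ^r\hookrightarrow\bbZ^{r+1}$, and the natural~$\GL_r(\bbZ)$-action on each~$M_r$ arises from the pair~$(\bbZ^r,\overline{\bbZ}^r)$ through the functoriality of~$F$. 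The first step is therefore to unpack these definitions carefully and verify that~$M_\bullet$ is of the shape expected by Dwyer's framework, in particular that the structure maps~$M_r\to M_{r+1}$ and the~$\GL_r(\bbZ)$-equivariance required in the definition of a~$G_\bullet$-module are inherited from the corresponding functoriality properties of~$F$.

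The second step is to invoke Lemma~3.1 of~\cite{Dwyer}, which translates the bifunctor~$F$ being polynomial of some fixed degree into the polynomial-degree hypothesis, phrased in terms of vanishing of iterated cross effects (or eventual constancy of suitable difference functors), that drives the main stability theorem in the same paper. Because constant functors and the adjoint example~$\Hom(\bbZ^\bullet,\Lambda^2\overline{\bbZ}^\bullet)$ are already explicitly covered, the generalization to arbitrary polynomial~$F$ is a formal consequence of this lemma. With that hypothesis in place, the main theorem of~\cite{Dwyer} yields that the maps~\eqref{eq:homs} are isomorphisms for~$r$ large enough compared with~$d$ and the degree of~$F$, which is exactly the statement of twisted homological stability for~$\GL_\bullet(\bbZ)$ with respect to~$M_\bullet$.

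The only real obstacle is translational rather than mathematical: one has to align the present Definition~\ref{def:polyGL} with the conventions of~\cite{Dwyer}, and in particular one must confirm that writing the second variable as the inverse transpose really does turn a contravariant dependence into a covariant one in the functorial sense Dwyer requires. Once this bookkeeping is settled there is no further input needed, which is why the author is justified in describing the proof as an immediate application of Dwyer's results.
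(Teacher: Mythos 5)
Your proposal matches the paper's own treatment: the paper also states that the theorem is an immediate application of the main stability theorem of Dwyer's paper, using Lemma~3.1 of that paper to handle general polynomial coefficient systems. You have merely elaborated the same reduction, paying more explicit attention to the bookkeeping around the inverse transpose and the translation of Definition~\ref{def:polyGL} into Dwyer's setting.
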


\subsection{Stability for the automorphism groups of free nilpotent groups}

We are now ready to prove twisted homological stability for the automorphism groups of free nilpotent groups with polynomial coefficients. We first define the latter.

\begin{definition}
We say that an~$\Aut(\rmN^c_\bullet)$-module~$M_\bullet$ is {\em polynomial} if
it is the restriction of a polynomial~$\GL_\bullet(\bbZ)$-module (in the sense of Definition~\ref{def:polyGL}) along the homomorphism~$\Aut(\rmN^c_\bullet)\to\Aut(\rmN^1_\bullet)=\GL_\bullet(\bbZ)$.
\end{definition}

Since a restriction of a constant module is constant, we see the constant modules are again examples of polynomial~$\Aut(\rmN^c_\bullet)$-modules.

\begin{proposition}\label{prop:Hom_is_polynomial}
For every polynomial module~$M_\bullet$, and integers~$c\geqslant1$,~$t\geqslant0$, the module
\[
\rmH_t(\Hom(\rmH_1(\rmN^c_\bullet;\bbZ),\rmH_2(\rmN^c_\bullet;\bbZ));M_\bullet)
\]
is polynomial as well.
\end{proposition}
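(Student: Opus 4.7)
The plan is to recognize the abelian group $A_r := \Hom(\rmH_1(\rmN^c_r;\bbZ), \rmH_2(\rmN^c_r;\bbZ))$ as a polynomial $\GL_r(\bbZ)$-module, to observe that it acts trivially on $M_r$, to compute the homology via the universal coefficient theorem, and finally to verify that the resulting tensor product is again polynomial.

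First I would use Proposition~\ref{prop:homology12} to identify
\[
A_r \cong \overline{\bbZ}^r \otimes \Lie_r^{c+1}
\]
as $\GL_r(\bbZ)$-modules; the equivariance of this identification follows from naturality together with Remark~\ref{rem:action}. The Lie power $\Lie^{c+1}$ is a subfunctor of the $(c+1)$-fold tensor power and hence a polynomial functor of $\bbZ^r$, so $A_r$ is the value at $(\bbZ^r, \overline{\bbZ}^r)$ of a polynomial bifunctor in the sense of Definition~\ref{def:polyGL}.

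Next I would argue that $A_r$ acts trivially on $M_r$. By hypothesis the action of $\Aut(\rmN^{c+1}_r)$ on $M_r$ factors through $\GL_r(\bbZ) = \Aut(\rmN^1_r)$, which itself factors through $\Aut(\rmN^c_r)$, while $A_r$ lies in the kernel of $\Aut(\rmN^{c+1}_r) \to \Aut(\rmN^c_r)$. Since $A_r$ is free abelian of finite rank, $\rmH_*(A_r;\bbZ) \cong \Lambda^* A_r$ is free abelian in every degree, so the universal coefficient theorem produces a natural, hence $\GL_r(\bbZ)$-equivariant, isomorphism
\[
\rmH_t(A_r; M_r) \cong \Lambda^t A_r \otimes M_r.
\]

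Finally, $\Lambda^t A_r$ is polynomial as a subfunctor of the polynomial functor $A_r^{\otimes t}$, and the tensor product of two polynomial bifunctors is again polynomial, so $\Lambda^t A_r \otimes M_r$ is polynomial as required. The one point that actually needs to be checked against~\cite{Dwyer} is the closure of the polynomial bifunctor class under $\otimes$ and under subfunctors; both properties follow formally from the cross-effect characterization of polynomiality, so this is routine bookkeeping rather than a genuine obstacle.
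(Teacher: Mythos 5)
Your proof is correct and follows essentially the same route as the paper's: the group $A_\bullet$ is free abelian of finite rank so its homology is the exterior algebra $\Lambda^\ast A_\bullet$, the universal coefficient theorem then gives $\rmH_t(A_\bullet;M_\bullet)\cong\Lambda^t A_\bullet\otimes M_\bullet$, and polynomiality follows from that of $\rmH_1(\rmN^c_\bullet;\bbZ)$ and $\rmH_2(\rmN^c_\bullet;\bbZ)\cong\Lie^{c+1}_\bullet$. You are slightly more explicit than the paper in spelling out the identification $A_r\cong\overline{\bbZ}^r\otimes\Lie_r^{c+1}$ and, more importantly, in observing that $A_r$ acts trivially on $M_r$, which is precisely what licenses the universal coefficient theorem in this simple tensor form and which the paper leaves implicit.
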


Before we give a proof, let us refer back to Remark~\ref{rem:action} for the~$\Aut(\rmN_r^c)$-action on its module~$\Hom(\rmH_1(\rmN^c_\bullet;\bbZ),\rmH_2(\rmN^c_\bullet;\bbZ))$.

\begin{proof}
Since~$\Hom(\rmH_1(\rmN^c_\bullet;\bbZ),\rmH_2(\rmN^c_\bullet;\bbZ))$ is a free abelian group of finite rank, we have an identification
\[
\rmH_t(\Hom(\rmH_1(\rmN^c_\bullet;\bbZ),\rmH_2(\rmN^c_\bullet;\bbZ));\bbZ)\cong\Lambda^t( \Hom( \rmH_1(\rmN^c_\bullet;\bbZ) , \rmH_2(\rmN^c_\bullet;\bbZ) ) ),
\]
for the constant coefficients $\bbZ$. Since this is free abelian, the universal coefficient theorem gives
\[
\rmH_t(\Hom(\rmH_1(\rmN^c_\bullet;\bbZ),\rmH_2(\rmN^c_\bullet;\bbZ));M_\bullet)\cong\Lambda^t( \Hom( \rmH_1(\rmN^c_\bullet;\bbZ) , \rmH_2(\rmN^c_\bullet;\bbZ) ) )\otimes M_\bullet
\]
in general. It remains to be noted that~$\rmH_1(\rmN^c_\bullet;\bbZ)$ and~$\rmH_2(\rmN^c_\bullet;\bbZ)$ are polynomial. But, this follows from the Proposition~\ref{prop:homology12}: The abelianization~$\rmH_1(\rmN^{c-1}_r;\bbZ)\cong\bbZ^r$ is even linear (and independent of~$c$), whereas the Schur multiplier~$\rmH_2(\rmN^{c-1}_r;\bbZ)\cong\Lie_r^c$ is polynomial. See~\cite[I.7,~Ex.~12]{Macdonald} for instance.
\end{proof}

\begin{theorem}\label{thm:HS_nil}
For every class~$c\geqslant1$, the sequence~$\Aut(\rmN^c_\bullet)$ of automorphism groups of the free nilpotent groups of class~$c$ satisfies twisted homological stability with respect to polynomial coefficients.
\end{theorem}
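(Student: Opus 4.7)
The plan is to induct on the nilpotency class $c \geq 1$. The base case $c = 1$ reduces to Dwyer's Theorem~\ref{thm:HS_GL}, since $\Aut(\rmN^1_\bullet) = \GL_\bullet(\bbZ)$ and the two notions of polynomial module agree by definition in that case.

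For the inductive step, I would assume the statement for class $c - 1$ and apply Theorem~\ref{thm:inflation} to transport stability across the extension constructed in the previous section. Together with Andreadakis' surjectivity, that extension yields a surjective morphism of sequences $q_\bullet \colon \Aut(\rmN^c_\bullet) \to \Aut(\rmN^{c-1}_\bullet)$; naturality in $r$ is immediate because extending automorphisms by fixing the $(r+1)$-st generator commutes with the canonical projection $\rmN^c_\bullet \to \rmN^{c-1}_\bullet$. Its kernel is the $\Aut(\rmN^{c-1}_\bullet)$-module $K_\bullet = \Hom(\rmH_1(\rmN^{c-1}_\bullet;\bbZ), \rmH_2(\rmN^{c-1}_\bullet;\bbZ))$.

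To feed the inflation theorem, I would verify that for every polynomial $\Aut(\rmN^c_\bullet)$-module $M_\bullet$ and every $t \geq 0$, the $\Aut(\rmN^{c-1}_\bullet)$-module $\rmH_t(K_\bullet; k_\bullet^* M_\bullet)$ is again polynomial. This is precisely Proposition~\ref{prop:Hom_is_polynomial}, which identifies the homology as $\Lambda^t(K_\bullet) \otimes M_\bullet$ via the torus description of $\B K_\bullet$ and the universal coefficient theorem, tracing polynomiality back to the Magnus--Witt formula for $\rmH_2(\rmN^{c-1}_\bullet;\bbZ) \cong \Lie_\bullet^c$. With this input in hand, the inductive hypothesis applies to each coefficient system appearing on the $E^2$-pages of the Lyndon--Hochschild--Serre spectral sequences, and Theorem~\ref{thm:inflation} delivers twisted homological stability for $\Aut(\rmN^c_\bullet)$ with coefficients in $M_\bullet$, closing the induction.

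The main obstacle to watch for is not the inductive mechanics but the closure of the class of polynomial coefficients under the operation $M_\bullet \mapsto \rmH_t(K_\bullet; k_\bullet^* M_\bullet)$. Stability for constant coefficients alone would be insufficient, because the Lyndon--Hochschild--Serre $E^2$-page forces one to consider genuinely twisted $Q_\bullet$-modules even when $M_\bullet$ is constant; this is the structural reason the proof must carry polynomial (rather than merely constant) coefficients throughout, and it is also why Proposition~\ref{prop:Hom_is_polynomial}, which keeps the induction self-sustaining, is the load-bearing technical step.
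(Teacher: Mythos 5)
Your proof proposal matches the paper's argument exactly: induct on the class $c$, use Dwyer's theorem for $c=1$, and for the inductive step apply Theorem~\ref{thm:inflation} to the extension of $\Aut(\rmN^{c-1}_\bullet)$ by the free abelian kernel, with Proposition~\ref{prop:Hom_is_polynomial} supplying the needed closure of polynomial modules under $M_\bullet \mapsto \rmH_t(K_\bullet; k_\bullet^* M_\bullet)$. Your closing remark correctly identifies why polynomial (not merely constant) coefficients must be carried through the induction, which is the same structural point the paper emphasizes.
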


\begin{proof}
We prove this by induction on~$c$. The case~$c=1$, i.e.~$\Aut(\rmN^1_\bullet)=\GL_\bullet(\bbZ)$ is dealt with by Dwyer's Theorem~\ref{thm:HS_GL}. Let us therefore assume that~$c\geqslant2$ and that the statement is true for~$c-1$. Our Theorem~\ref{thm:inflation}, applied to the extension
\[
0\longrightarrow
\Hom(\rmH_1(\rmN^{c-1}_\bullet;\bbZ),\rmH_2(\rmN^{c-1}_\bullet;\bbZ))
\longrightarrow\Aut(\rmN^c_\bullet)
\longrightarrow\Aut(\rmN^{c-1}_\bullet)
\longrightarrow 1,
\]
immediately gives the statement for~$c$. In order to be able to apply that theorem, we need to know that the modules
\[
\rmH_t(\Hom(\rmH_1(\rmN^{c-1}_\bullet;\bbZ),\rmH_2(\rmN^{c-1}_\bullet;\bbZ));M_\bullet)
\]
are polynomial for any $t\geqslant0$ and any given polynomial module~$M_\bullet$. But this follows from Proposition~\ref{prop:Hom_is_polynomial} above.
\end{proof}


\section{Other examples}\label{sec:other}

In this section, we re-derive and generalize two homological stability results involving extensions of the symmetric groups: braid groups, were homological stability for constant coefficients has originally been studied in~\cite{Arnold:braid}, and wreath products, where homological stability for constant coefficients has been proven by Hatcher and Wahl~\cite{Hatcher+Wahl}. We begin by reviewing the result that we need about the symmetric groups.


\subsection{Twisted homological stability for the symmetric groups}

Let us first review the twisted homological stability for the symmetric groups as far as we need them here. For that purpose, we let~$\Gamma^\op$ denote (a skeleton of) the category of finite pointed sets. We refer to \cite{Pirashvili1} and \cite{Pirashvili2} for the corresponding notion of a polynomial functor $F$ from~$\Gamma^\op$ to the category of abelian groups. Such a functor $F$ defines a module $M_\bullet$ with $M_r=F(\{1,\dots,r\})$, and this is then also said to be polynomial. The following result is~\cite[Theorem~4.3]{Betley02}.

\begin{theorem}\label{thm:HS_S}{\upshape\bf(Betley)}
The sequence~$\rmS_\bullet$ of symmetric groups satisfies homological stability with respect to polynomial~$\rmS_\bullet$-modules.
\end{theorem}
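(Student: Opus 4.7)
The plan is to proceed by induction on the polynomial degree~$d$ of the functor~$F\colon\Gamma^{\op}\to\mathrm{Ab}$ from which the coefficient module~$M_\bullet$ is obtained (so that $M_r=F(\{0,1,\dots,r\})$ with~$0$ the basepoint, and the structure maps~$\rmS_r\hookrightarrow\rmS_{r+1}$ are induced by the morphisms of pointed sets that adjoin a new non-basepoint). The base case $d=0$ corresponds to constant functors, where the statement is Nakaoka's classical homological stability theorem for the symmetric groups.

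For the inductive step, I would exploit Pirashvili's structure theory of $\Gamma^{\op}$-modules. A polynomial functor~$F$ of degree~$d\geqslant 1$ has vanishing $(d+1)$-st cross-effect, and this allows one to fit~$F$ into a short exact sequence
\[
0\longrightarrow F'\longrightarrow F\longrightarrow F''\longrightarrow 0
\]
of $\Gamma^{\op}$-modules in which $F''$ is of polynomial degree strictly less than~$d$ (for instance, by splitting off the reduced part of~$F$ via the basepoint inclusion), and $F'$ is of ``atomic'' or induced form. Passing to group homology in each rank~$r$ gives, naturally in~$r$, a long exact sequence
\[
\cdots\longrightarrow\rmH_n(\rmS_r;F'_r)\longrightarrow\rmH_n(\rmS_r;F_r)\longrightarrow\rmH_n(\rmS_r;F''_r)\longrightarrow\cdots,
\]
and the five-lemma will upgrade stability from the outer terms to the middle term, provided that the inductive hypothesis can be brought to bear on both~$F'_\bullet$ and~$F''_\bullet$. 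For~$F''_\bullet$ this is immediate since its polynomial degree has dropped. For~$F'_\bullet$ one argues via a Shapiro-type identification: the atomic pieces are obtained by inducing up from stabilizers of the form~$\rmS_{r-k}\times\rmS_k$ with coefficients in a functor of strictly smaller degree, and Shapiro's lemma reduces their homology to that of~$\rmS_{r-k}$ with polynomial coefficients of lower degree, to which the induction hypothesis again applies.

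The principal obstacle is the construction of the short exact sequence above and the identification of the ``atomic'' subfunctor~$F'$ in a form that makes the Shapiro reduction visible in a way that is natural with respect to the stabilization maps $\rmS_r\hookrightarrow\rmS_{r+1}$. This is exactly the content of Pirashvili's analysis of~$\Gamma^{\op}$-modules and the associated filtration of polynomial functors by cross-effects. Once this categorical ingredient is in place, the rest of the argument is formal: a five-lemma chase on the long exact sequence in group homology, combined with the base case of Nakaoka's theorem, closes the induction.
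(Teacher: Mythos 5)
The paper does not supply a proof of this theorem: it cites it as~\cite[Theorem~4.3]{Betley02} and records only that Betley's argument is an induction on the degree of the polynomial functor with Nakaoka's constant-coefficient theorem as the base case. Your proposal reconstructs precisely that outline (degree induction, Nakaoka base case, cross-effect decomposition plus Shapiro's lemma for the inductive step), so it is essentially the same approach as the one the paper points to.
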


The proof given by Betley is an induction on the degree of the polynomial functor, based on the case of constant coefficients that is originally due to \cite{Nakaoka}. He also obtains a quantitative statement to the effect that the stable range only depends on the degree of the coefficient~$\rmS_\bullet$-module. 


\subsection{Wreath products}

Let~$G$ be any group. The wreath products~$G\wr\rmS_r=G^r\rtimes\rmS_r$ fit into a sequence~$G\wr\rmS_\bullet$ of groups. The following is a qualitative reformulation of~\cite[Theorem~1.6]{Hatcher+Wahl}.

\begin{theorem}{\upshape\bf(Hatcher, Wahl)}
For every group~$G$, the sequence~$G\wr\rmS_\bullet$ of wreath products satisfies homological stability with respect to all constant~$G\wr\rmS_\bullet$-modules.
\end{theorem}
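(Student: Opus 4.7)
The plan is to apply Theorem~\ref{thm:inflation} to the split extension
\[
1\longrightarrow G^\bullet\longrightarrow G\wr\rmS_\bullet\longrightarrow\rmS_\bullet\longrightarrow 1
\]
of sequences of groups, where $G^\bullet$ denotes the sequence whose $r$-th term is~$G^r$ with structure maps $G^r\to G^{r+1}$ given by inserting the identity in the last coordinate. The canonical surjection onto the permutation factor, the identification of its kernel as~$G^r$, and the compatibility of both with the standard inclusions $G\wr\rmS_r\hookrightarrow G\wr\rmS_{r+1}$ are immediate from the semidirect product description $G\wr\rmS_r=G^r\rtimes\rmS_r$.

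According to Theorem~\ref{thm:inflation}, to deduce homological stability of~$G\wr\rmS_\bullet$ with respect to a constant module~$A$, it suffices to verify that~$\rmS_\bullet$ satisfies homological stability with respect to each of the~$\rmS_\bullet$-modules $\rmH_t(G^\bullet;A)$, for~\hbox{$t\geqslant0$}. Here the trivial $G\wr\rmS_r$-module~$A$ pulls back to the trivial~$G^r$-module, and the $\rmS_\bullet$-module structure on the homology of the kernel is induced by the permutation action of~$\rmS_r$ on the factors of~$G^r$. By Betley's Theorem~\ref{thm:HS_S}, each of these stability statements in turn reduces to checking that the modules $\rmH_t(G^\bullet;A)$ are polynomial in the Pirashvili sense.

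For the polynomiality I would appeal to the K\"unneth formula. Because~$A$ is trivial and $\B(G^r)\simeq(\B G)^r$, the group $\rmH_t(G^r;A)$ decomposes as a finite direct sum of tensor products of the form $\rmH_{t_1}(G;\bbZ)\otimes\cdots\otimes\rmH_{t_r}(G;\bbZ)\otimes A$ indexed by $t_1+\cdots+t_r=t$, together with the analogous Tor corrections in total degree~$t-1$. In each summand at most~$t$ of the indices~$i$ have~$t_i>0$, the remaining factors contributing just~$\bbZ$. Replacing $(\B G)^r$ by $(\B G)^{S\setminus *}$ for a variable finite pointed set~$(S,*)$ promotes this construction to a functor on pointed finite sets whose value on~$r_+$ is~$\rmH_t(G^r;A)$, and the counting above forces its cross-effects of order exceeding~$t$ to vanish. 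This makes the functor polynomial of degree at most~$t$, which is exactly what Betley's theorem consumes.

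The main obstacle is the bookkeeping in the previous paragraph: one must pin down the correct variance of the functor $(S,*)\mapsto(\B G)^{S\setminus *}$ on pointed finite sets (coming from precomposition), verify that the K\"unneth isomorphism and its Tor correction are natural in this structure and equivariant for the factor-permuting action of~$\rmS_r$, and match the resulting functor with Pirashvili's framework so that Theorem~\ref{thm:HS_S} truly applies to it. Once this is settled, chaining Theorem~\ref{thm:inflation} with Theorem~\ref{thm:HS_S} delivers the conclusion.
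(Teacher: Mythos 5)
Your proposal follows the paper's strategy: apply Theorem~\ref{thm:inflation} to the extension $1\to G^\bullet\to G\wr\rmS_\bullet\to\rmS_\bullet\to1$ and then invoke Betley's Theorem~\ref{thm:HS_S} after checking that the $\rmS_\bullet$-modules $\rmH_t(G^\bullet;A)$ are polynomial. The one genuine difference is that the paper first spends a paragraph reducing from arbitrary constant coefficients $A$ to the case of a field $\bbF$ (via the universal coefficient theorem to pass from $\bbZ$ to general $A$, and via $\bbQ$ and $\bbZ/p$ to handle $\bbZ$). Over a field the K\"unneth isomorphism is exact with no Tor terms, so $\rmH_t(G^\bullet;\bbF)$ is literally the degree-$t$ part of the tensor powers of $\rmH_*(G;\bbF)$, and its polynomiality of degree at most $t$ is immediate. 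You instead keep $A$ arbitrary, which forces you into the iterated K\"unneth short exact sequences over $\bbZ$ with their nested $\mathrm{Tor}$ corrections, followed by a universal-coefficient step for $A$. That can be made to work -- every $\mathrm{Tor}$ in degree zero vanishes against the free group $\rmH_0(G;\bbZ)=\bbZ$, so the degree-$\leqslant t$ cross-effect bound survives -- but it is noticeably messier, and your own remark that "the main obstacle is the bookkeeping" is precisely the cost you incur by skipping the field reduction. The variance issue you flag for the $\Gamma^{\op}$-functor is common to both proofs and is implicitly subsumed in the appeal to Betley and Pirashvili; the paper glosses over it in exactly the same way you do.
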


We now give a different proof that is based upon our general result, Theorem~\ref{thm:inflation}.

\begin{proof}
First of all, we note that the statement for all constant coefficients~$A$ follows from the special case~$A=\bbZ$ and the universal coefficient theorem. And this case in turn follows from the cases in which~$A$ is one of the fields~$\bbQ$ or~$\bbZ/p$ for a prime number~$p$. 

Let~$A=\bbF$ be a field. We still need to prove that the sequence~$G\wr\rmS_\bullet$ of wreath products satisfies homological stability with respect to the constant~$G\wr\rmS_\bullet$-module~$\bbF$. By Theorem~\ref{thm:inflation}, applied to the extension
\[
1\longrightarrow
G^\bullet\longrightarrow
G\wr\rmS_\bullet\longrightarrow
\rmS_\bullet\longrightarrow
1,
\]
it suffices to check that the sequence~$\rmS_\bullet$ of symmetric groups satisfies homological stability with respect to the~$\rmS_\bullet$-modules~$\rmH_t(G^\bullet;\bbF)$ for all $t\geqslant 0$. These are no longer constant~$\rmS_\bullet$-modules, but they are polynomial~$\rmS_\bullet$-modules by the K\"unneth theorem~(for the field~$\bbF$): the tensor powers of the $\bbF$-homology of~$G$. Therefore, the result follows from Theorem~\ref{thm:HS_S}.
\end{proof}


\subsection{Braid groups}

Homological stability results for the Artin braid groups~$\rmB_r$ have first been proven in~\cite{Arnold:braid}. Other proofs have been given in other places, see again~\cite{Hatcher+Wahl}, for example. And we can also re-derive it here using our general result, Theorem~\ref{thm:inflation}.

\begin{theorem}{\upshape\bf(Arnold)}
The sequence~$\rmB_\bullet$ of Artin braid groups satisfies homological stability with respect to all constant~$\rmB_\bullet$-modules.
\end{theorem}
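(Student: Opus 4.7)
The strategy mimics the wreath-product proof. First reduce from arbitrary constant coefficients $A$ to the case $A=\bbF$ for a prime field $\bbF_p$ or $\bbF=\bbQ$, using the universal coefficient theorem. Then apply Theorem~\ref{thm:inflation} to the classical pure-braid extension
\[
1\longrightarrow \rmP_\bullet \longrightarrow \rmB_\bullet \longrightarrow \rmS_\bullet \longrightarrow 1,
\]
where $\rmP_r$ denotes the pure braid group on $r$ strands. This reduces the question to showing that, for each $t\geqslant 0$, the $\rmS_\bullet$-module $\rmH_t(\rmP_\bullet;\bbF)$ is polynomial in the sense of Pirashvili; once this is established, Betley's Theorem~\ref{thm:HS_S} delivers the desired stability statement.

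The heart of the argument, and the main obstacle, is verifying this polynomiality of $\rmH_t(\rmP_\bullet;\bbF)$ as an $\rmS_\bullet$-module. For this I would invoke the classical computation of Arnold: $\rmH^*(\rmP_r;\bbZ)$ is isomorphic to the Orlik--Solomon algebra of the braid arrangement, free abelian in each degree with a combinatorial basis indexed by tuples of pairs drawn from $\{1,\dots,r\}$, and the symmetric group $\rmS_r$ acts through its natural permutation action on such pairs. Since $\rmH^*(\rmP_r;\bbZ)$ is torsion-free, the universal coefficient theorem identifies $\rmH_t(\rmP_r;\bbF)$ with an explicit combinatorial $\bbF[\rmS_r]$-module that extends naturally to a functor on $\Gamma^{\op}$.

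The remaining technical task is to check that this functor is polynomial of degree at most $2t$ in the sense of \cite{Pirashvili1,Pirashvili2}: since every basis element of $\rmH_t(\rmP_r;\bbF)$ involves at most $2t$ indices of $\{1,\dots,r\}$, collapsing further points to the basepoint in $\Gamma^{\op}$ contributes nothing new, and the relevant cross-effects vanish after at most $2t$ variables. This is exactly the Pirashvili condition needed to bring Theorem~\ref{thm:HS_S} to bear, completing the proof.
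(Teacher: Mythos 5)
Your proposal follows the paper's route exactly: reduce to field coefficients, apply Theorem~\ref{thm:inflation} to the pure-braid extension $1\to\rmP_\bullet\to\rmB_\bullet\to\rmS_\bullet\to1$, and conclude via Betley's Theorem~\ref{thm:HS_S} once the $\rmS_\bullet$-modules $\rmH_t(\rmP_\bullet;\bbF)$ are seen to be polynomial. Where the paper simply cites Arnold, Brieskorn, and Vershinin for that last point, you supply a plausible justification through the Orlik--Solomon description and the degree bound $2t$, which is consistent with the paper's intent.
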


\begin{proof}
We will apply Theorem~\ref{thm:inflation} to the extension
\[
1
\longrightarrow\rmP_r
\longrightarrow\rmB_r
\longrightarrow\rmS_r
\longrightarrow1
\]
of the symmetric groups~$\rmS_r$ by the group~$\rmP_r$ of colored (or pure) braids. To be able to do so, it remains to observe that the~$\rmS_\bullet$-modules~$\rmH_t(\rmP_\bullet;\bbF)$ are polynomial for all~\hbox{$t\geqslant 0$}. See~\cite{Arnold:pure} for their original description, and~\cite{Brieskorn},~\cite{Vershinin1}, as well as~\cite{Vershinin2} for more recent expositions of these homology groups.
\end{proof}

The homology groups of the pure braid groups, together with the natural actions of the symmetric groups on them, form the so-called {\it Gerstenhaber operad}, as does the homology of any~$E_2$ operad, see Cohen's contribution to~\cite{Cohen}. The reader might be attracted by the idea of deriving the polynomiality of these homology groups from more general principles, and then deducing other homological stability results. This will not be pursued further here.


\section*{Acknowledgment}

This research has been supported by the Danish National Research Foundation through the Centre for Symmetry and Deformation (DNRF92). I thank Dieter Degrijse, Giovanni Gandini, and Nathalie Wahl for discussions related to the contents of this paper. 



\vfill

Markus Szymik\\
Department of Mathematical Sciences\\ 
University of Copenhagen\\
2100 Copenhagen \O\\ 
DENMARK\\
\phantom{ }\\
\href{mailto:szymik@math.ku.dk}{szymik@math.ku.dk}\\
\href{http://www.math.ku.dk/~xvd217}{www.math.ku.dk/$\sim$xvd217}


\end{document}